\newcommand\ZZ{\mathbb{Z}}
\newcommand\QQ{\mathbb{Q}}
\newcommand\RR{\mathbb{R}}
\newcommand{\beq}[1]{\begin{equation}\label{#1}} 
\newcommand{\eeq}{\end{equation}}                
\newcommand{\beqn}{\begin{equation*}}
\newcommand{\eeqn}{\end{equation*}}
\newcommand{\pp}[1]{{\left({#1}\right)}}  
\newcommand{\Abs}[1]{{\left|{#1}\right|}}  
\renewcommand{\int}[1]{{\left\lceil{#1}\right\rceil}}  
\newtheorem{thm}{{Theorem}}
\newtheorem{lema}{{Lemma}}
\title{Concentration of points on Modular Quadratic Forms}
\author[A. Zumalac\'{a}rregui]{Ana Zumalac\'{a}rregui}
\address{Departamento de Matem\'aticas, Universidad Aut\'onoma de Madrid and Instituto de Ciencias Matem\'aticas (CSIC-UAM-UC3M-UCM), 28049 Madrid, Spain}
\email{ana.zumalacarregui@uam.es}
\begin{document}

\begin{abstract}
Let $Q(x,y)$ be a quadratic form with discriminant $D\neq 0$. We obtain non trivial upper bound estimates
for the number of solutions of the congruence $Q(x,y)\equiv
\lambda \pmod{p}$, where $p$ is a prime and $x,y$ lie in certain
intervals of length $M$, under the assumption that $Q(x,y)-\lambda$ is an absolutely irreducible polynomial modulo $p$. In particular we prove that the number of solutions to this congruence is $M^{o(1)}$ when $M\ll p^{1/4}$.
These estimates generalize a previous result by Cilleruelo and
Garaev on the particular congruence $xy\equiv
\lambda \pmod{p}$.
\end{abstract}
\keywords{modular equation; quadratic form; concentration of points. }
\subjclass[2000]{11A07, 11B75.}
\thanks{The author was supported by Departamento de Matem\'aticas, Universidad Autónoma de Madrid, Spain.}
\date{January 31, 2011}

\maketitle 

\section{Introduction}
Let $Q(x,y)$ be a quadratic form with discriminant $D\neq 0$. For
any odd prime $p$ and $\lambda\in \ZZ$, we consider the congruence
\beq{mod_quadratic} Q(x,y)\equiv \lambda \pmod {p}
\qquad\left\{\begin{array}{l} K+1\leq x\leq K+M,\\ L+1\leq y\leq
L+M,\end{array}\right. \eeq for arbitrary values of $K,L$ and $M$.
We denote by $I_Q(M;K,L)$ the number of solutions to
\eqref{mod_quadratic}. 

\smallskip

It follows from \cite{Shpa-Voloch,Zheng} that if the quadratic form $Q(x,y)-\lambda$ is absolutely irreducible modulo $p$, one can derive from the Bombieri bound \cite{Bom} that \beq{trigonometric_estimate}
I_Q(M;K,L)=\frac{M^2}{p}+O(p^{1/2}\log^2p).
\eeq 

Whenever $M$ is small, say $M\ll p^{1/2}\log^2p$, this estimate provides an upper bound which is worse than the trivial estimate $I_Q(M;K,L)\leq 2M$
(for every $x$ in the range we have a second degree polynomial in
$y$ with no more than two solutions). \

\smallskip

In the special case $Q(x,y)=xy$ and $(\lambda,p)=1$, Chan and Shparlinsky \cite{Chan}
used sum product estimates to obtain a non trivial estimate \beqn
I(M;K,L)\ll M^2/p+M^{1-\eta}, \eeqn for some $\eta>0$.
Cilleruelo and Garaev \cite{Ci-Garaev}, using a different method,
improved this estimate: \beqn I(M;K,L)\ll
\pp{M^{4/3}p^{-1/3}+1}M^{o(1)}.\eeqn The aim of this work is to
generalize Cilleruelo and Garaev's estimate to any non-degenerate
quadratic form.

\begin{thm}\label{Thm}
 Let $Q(x,y)$ be a quadratic form defined over $\ZZ$, with discriminant $D\neq 0$. For any  prime $p$ and $\lambda\in \ZZ$ such that $Q(x,y)-\lambda$ is absolutely irreducible modulo $p$, we have
\beqn I_Q(M;K,L)\ll \pp{M^{4/3}p^{-1/3}+1}M^{o(1)}. \eeqn
\end{thm}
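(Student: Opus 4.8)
The plan is to reduce the general conic to a diagonal one and then split according to the quadratic character of the discriminant, applying the Cilleruelo--Garaev estimate directly in the split case and adapting their method in the non-split case. First I would normalize: writing $Q(x,y)=ax^2+bxy+cy^2$, after possibly interchanging $x,y$ we may assume $p\nmid 2a$ (if $p\mid 2a$ and $p\mid 2c$ then $D\equiv b^2$ and $Q\equiv bxy$, which is exactly the case treated in \cite{Ci-Garaev}). Completing the square gives $4a\,Q(x,y)=(2ax+by)^2-Dy^2$, so the substitution $u\equiv 2ax+by,\ v=y$, a bijection modulo $p$ since its determinant $2a$ is invertible, turns \eqref{mod_quadratic} into
\beqn
u^2-Dv^2\equiv \mu \pmod p, \qquad \mu:=4a\lambda,
\eeqn
with $u,v$ ranging over intervals of length $O(M)$ (the image of the box under a fixed linear map). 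Hence $I_Q(M;K,L)$ is bounded by the number of such $(u,v)$. The absolute irreducibility of $Q-\lambda$ is equivalent to the smoothness of this conic, which forces $\mu\neq 0$ (while $D\neq0$ is given); this is the only place the hypothesis is used, and it is precisely what prevents the right-hand side from degenerating into a product of two linear forms.

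Next I would dispose of large $M$. If $M\geq p^{5/8}$, the trigonometric estimate \eqref{trigonometric_estimate} already yields $I_Q(M;K,L)\ll M^2/p+p^{1/2}\log^2p\ll \pp{M^{4/3}p^{-1/3}}M^{o(1)}$, since $M^2/p\le M^{4/3}p^{-1/3}$ for $M\le p$ and $M^{o(1)}$ absorbs the logarithm. So I may assume $M<p^{5/8}$, in particular $M<p$, where a combinatorial input is needed to beat the trivial bound $2M$. If $\left(\frac{D}{p}\right)=1$, write $D\equiv d^2$ and factor $u^2-Dv^2=(u-dv)(u+dv)$; the further bijective change $\xi=u-dv,\ \eta=u+dv$ places us exactly in the framework of Cilleruelo and Garaev, namely counting solutions of $\xi\eta\equiv\mu\pmod p$ with $\mu\neq0$ and $\xi,\eta$ in intervals of length $O(M)$, whose bound $\pp{M^{4/3}p^{-1/3}+1}M^{o(1)}$ is exactly the one sought.

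The remaining, and hardest, case is $\left(\frac{D}{p}\right)=-1$, the anisotropic ``modular ellipse'', where $u^2-Dv^2$ does not factor over $\FF_p$. Here I would work in $\FF_{p^2}=\FF_p(\theta)$ with $\theta^2=D$, so that $u^2-Dv^2=N(u+\theta v)$ is the norm form and the solutions correspond to the elements $z=u+\theta v$ of norm $\mu$, a coset of the cyclic norm-one torus of order $p+1$, subject to $u,v$ lying in short intervals. The main obstacle is that one cannot simply reduce to the hyperbola case over $\FF_{p^2}$, because $u+\theta v$ is confined to a ``box'' inside $\FF_{p^2}\cong\FF_p^2$ rather than to a product of independent intervals. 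Instead I expect to reprove the Cilleruelo--Garaev counting intrinsically on this torus, using that two solutions differ by a torus element, that any fixed shift is compatible with at most two solutions (a line meets the conic in at most two points), and that the associated energy / close-pairs count carries over with $p+1$ replacing $p-1$.

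The delicate point, which I anticipate to be the crux of the whole argument, is verifying that the interval constraints on $u$ and $v$ transfer correctly to the group parameter on the non-split torus and that the resulting multiplicative-energy estimate holds uniformly in both tori. Granting this, the two character cases combine with the large-$M$ range to give, in all situations,
\beqn
I_Q(M;K,L)\ll\pp{M^{4/3}p^{-1/3}+1}M^{o(1)},
\eeqn
as claimed.
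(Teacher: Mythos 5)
Your opening diagonalization (completing the square to reach $u^2-Dv^2\equiv\mu\pmod p$ with $u,v$ in intervals of length $O(M)$, and noting that absolute irreducibility forces $\mu\neq 0$) matches the paper's first step, but after that there are two genuine gaps. First, in the split case $\left(\frac{D}{p}\right)=1$ the change $\xi=u-dv$, $\eta=u+dv$ does \emph{not} place you ``exactly in the framework of Cilleruelo and Garaev'': $d$ is a square root of $D$ modulo $p$ and is generically of size $\asymp p$, so as $(u,v)$ runs over a box of side $O(M)$, the residues $\xi,\eta\pmod p$ do not lie in intervals of length $O(M)$ --- they fill a sheared lattice box, i.e.\ a union of about $M$ intervals of length $O(M)$ scattered around $\ZZ/p\ZZ$ (equivalently: $\xi+\eta$ lies in a short interval, but $\eta-\xi$ lies in a dilate $2d\cdot J$ of an interval, which is an arithmetic progression with huge common difference, not an interval). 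The theorem of \cite{Ci-Garaev} counts points of $\xi\eta\equiv\mu\pmod p$ with both coordinates in genuine intervals and does not apply to such tilted domains; this is exactly the ``rotated box'' obstruction you yourself identify in the non-split case, and it does not disappear when $D$ is a quadratic residue. Second, in the non-split case you offer only an intention (``I expect to reprove the Cilleruelo--Garaev counting intrinsically on this torus'') and explicitly flag the transfer of the interval constraints to the torus parameter as an unresolved crux; so the hardest case is not proved, and, by the previous remark, the case you consider finished is not finished either.

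The paper sidesteps the mod-$p$ geometry entirely and never splits on $\left(\frac{D}{p}\right)$. After reducing to $x^2-Dy^2\equiv\lambda\pmod p$ with $D$ square-free, $D\neq 0,1$, it pigeonholes: for any $T<p$ there is $t<T^2$ with $tK\equiv k_0$, $tL\equiv\ell_0\pmod p$ and $\Abs{k_0},\Abs{\ell_0}<p/T$, which lifts the congruence to a family of genuine Diophantine equations $(tx+k_0)^2-D(ty+\ell_0)^2=n_z$ indexed by $O\pp{M^{4/3}p^{-1/3}+1}$ integers $z$ (taking $T=8\pp{1+\Abs{D}}M$ when $M$ is below roughly $p^{1/4}$, which forces $z=0$, and $T=(p/M)^{1/3}$ otherwise). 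Each such equation is then handled over $\ZZ$: its solutions are lattice points on a short arc of a real conic, and Lemma \ref{small_arc} (at most two lattice points on an arc of length $n^{1/6}$) together with Lemma \ref{big_arc} (at most $M^{o(1)}$ lattice points on arcs of length $M^{O(1)}$ when $n=M^{O(1)}$, proved for $D>0$ via the fundamental Pell unit) yield $M^{o(1)}$ solutions per value of $z$. Because the counting happens in characteristic zero, the axis-aligned box survives intact, so no torus parametrization is needed; the only dichotomy that matters is the sign of $D$ (ellipse versus hyperbola over $\RR$), and it is absorbed into Lemma \ref{big_arc}. To salvage your route you would need a Cilleruelo--Garaev-type bound for $\xi\eta\equiv\mu\pmod p$ with $(\xi,\eta)$ in an arbitrary sheared $M\times M$ lattice box, which is essentially a restatement of the original problem rather than a reduction of it.
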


This estimate is non trivial when $M=o(p)$ and better than
\eqref{trigonometric_estimate} whenever $M\ll p^{5/8} $.
Furthermore, when $M\ll p^{1/4}$ Theorem \ref{Thm} gives
$I_Q(M;K,L)=M^{o(1)}$, which is sharp. Probably the last estimate
also holds for $M\ll p^{1/2}$, but it seems to be a difficult
problem.

\smallskip

Note that if
\beqn
Q(x,y)-\lambda\equiv q_1(x,y)q_2(x,y)\pmod{p},
\eeqn
for some linear polynomials $q_i(x,y)\in \ZZ[x,y]$, we have that solutions in \eqref{mod_quadratic} will correspond to solutions of the linear equations $q_i(x,y)\equiv 0 \pmod {p}$ and we could have $\gg M$ different solutions. The condition of irreducibility is required to avoid this situation.

\smallskip

Observe that the condition $D\neq 0$ restrict ourselves to the study
of ellipses and hyperbolas. The given upper bound cannot be applied
to quadratic forms with discriminant $D=0$. For example the number
of solutions to \eqref{mod_quadratic} when $Q(x,y)=y-x^2$ is $\asymp
M^{1/2}$.

\section{Proof of Theorem \ref{Thm}}

The following lemmas will be required during our proof. These results will give us useful upper bounds over the number of lattice points in arcs of certain length on conics.

\begin{lema}\label{small_arc}
 Let $D\neq 0,1$ be a fixed square-free integer. On the conic $x^2-Dy^2=n$ an arc of length $n^{1/6}$ contains, at most, two lattice points.
\end{lema}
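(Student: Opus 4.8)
The plan is to argue by contradiction. Suppose an arc $\gamma$ of length $\le n^{1/6}$ carried three distinct lattice points $P_1,P_2,P_3$. Since $D$ is square-free and $D\neq 0,1$, it is not a perfect square, so $x^2-Dy^2$ is irreducible over $\QQ$ and the conic $\mathcal{C}\colon x^2-Dy^2=n$ contains no line; hence any line meets $\mathcal{C}$ in at most two points and the three points cannot be collinear. The doubled signed area $2A=\Abs{(x_2-x_1)(y_3-y_1)-(x_3-x_1)(y_2-y_1)}$ is then a nonzero integer, so $2A\ge 1$. The whole argument reduces to showing that three points on a short arc force $2A<1$.

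To bound $2A$ from above I would use the curvature of $\mathcal{C}$. Writing $F(x,y)=x^2-Dy^2-n$, the implicit curvature formula gives $\kappa=\Abs{D}\,n\,(x^2+D^2y^2)^{-3/2}$, and on $\mathcal{C}$ one checks that $x^2+D^2y^2\ge n$ (the minimum being attained at the vertex $y=0$, in both the hyperbolic and elliptic cases); hence $\kappa\le \Abs{D}\,n^{-1/2}$ at every point of the conic. I would then combine two elementary facts about a convex arc $\gamma$ of length $\ell$: the Euclidean distance between any two of its points is at most $\ell$, and the tangent direction turns by a total angle at most $\int_\gamma\kappa\,ds\le \Abs{D}\,n^{-1/2}\ell$. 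Since each chord of $\gamma$ is parallel to some tangent along $\gamma$ (by the mean value theorem, using convexity of the branch), the angle $\theta$ between the chords $P_1P_2$ and $P_1P_3$ satisfies $\sin\theta\le\theta\le \Abs{D}\,n^{-1/2}\ell$. Therefore
\[
2A=\Abs{P_2-P_1}\,\Abs{P_3-P_1}\,\sin\theta\le \ell^2\cdot \Abs{D}\,n^{-1/2}\ell=\Abs{D}\,n^{-1/2}\ell^3 .
\]

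Taking $\ell=n^{1/6}$ gives $2A\le \Abs{D}$; since $D$ is fixed, choosing the implied constant in the stated arc length suitably small (equivalently $\ell\le c_D\,n^{1/6}$) forces $2A<1$, contradicting $2A\ge 1$, so at most two lattice points can lie on the arc. I expect the main obstacle to be the uniform curvature bound: the curvature is largest precisely at the vertex, so that is where the estimate is tight and where the argument must be controlled, while away from the vertex the conic is much flatter and the bound is comfortable. A secondary point needing care is the turning-angle inequality, which rests on the convexity of each branch of $\mathcal{C}$ — this is exactly what lets me bound the angle between the two chords by the total turning of the tangent.
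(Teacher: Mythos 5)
The paper offers no proof of this lemma at all --- it is quoted as a particular case of Theorem 1.2 of Cilleruelo and Jim\'enez-Urroz --- so your proposal has to be measured against that source rather than against an argument in the text. Your method is exactly the one behind that theorem (and behind the Cilleruelo--C\'ordoba result for circles): three lattice points on an irreducible conic are never collinear, so the doubled triangle area is a nonzero integer, hence $\ge 1$, while curvature makes a short arc too flat to support area of that size. Your computations check out: for $F(x,y)=x^2-Dy^2-n$ the implicit formula gives $\kappa=\Abs{D}\Abs{n}\,(x^2+D^2y^2)^{-3/2}$, and on the conic $x^2+D^2y^2=\Abs{n}+D(D+1)y^2\ge\Abs{n}$ in every case (for the hyperbola with $n<0$ use instead $D^2y^2=D(x^2+\Abs{n})\ge\Abs{n}$), so $\kappa\le\Abs{D}\Abs{n}^{-1/2}$ throughout; the chord-parallel-to-tangent step and the total-turning bound on the angle between the two chords are standard and correct.

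The one genuine issue is the endgame, and you have identified it yourself: with $\ell=n^{1/6}$ your bound gives only $2A\le\Abs{D}$, so what you actually prove is the lemma for arcs of length $c_D\,n^{1/6}$ with $c_D<\Abs{D}^{-1/3}$, not for length $n^{1/6}$ as stated. But this retreat is not a defect of your argument: the unqualified statement is in fact false. Take $D=55$ (square-free) and $n=729$; then $(27,0)$, $(28,1)$, $(28,-1)$ all satisfy $x^2-55y^2=729$, and the arc joining them through the vertex has length approximately $2.96<3=n^{1/6}$, i.e.\ three lattice points on an arc shorter than $n^{1/6}$. The family $n=a^2$, $D=2a+1$, with points $(a,0)$, $(a+1,\pm 1)$, shows moreover that the loss $c_D\asymp\Abs{D}^{-1/3}$ in your version is essentially sharp --- it reflects precisely the curvature $\Abs{D}n^{-1/2}$ at the vertex, which you correctly singled out as the tight spot. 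So the lemma must be read with a $D$-dependent constant in the arc length, which is exactly what your proof delivers, and this weaker form is all the paper needs: in the proof of Theorem 1 the threshold $n_0>2^{48}(1+\Abs{D})^{12}M^{18}$ can simply be raised by the factor $c_D^{-6}$, which depends only on $D$, and the complementary range $n_0\le c_D^{-6}2^{48}(1+\Abs{D})^{12}M^{18}=M^{O(1)}$ is still covered by Lemma 2.
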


This lemma is a particular case of Theorem 1.2 in \cite{Ci-Urroz}.

\begin{lema}\label{big_arc}
 Let $D\neq 0,1$ be a fixed square-free integer. If $n=M^{O(1)}$, on the conic $x^2-Dy^2=n$ an arc of length $M^{O(1)}$ contains, at most, $M^{o(1)}$ lattice points.
\end{lema}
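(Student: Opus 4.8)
The plan is to combine the arithmetic fact that $n$ has few essentially different representations by the form with the geometric fact that the lattice points on a single branch are spread out geometrically, so that only $O(\log M)$ of them can fit on any arc of length $M^{O(1)}$. First I would dispose of the definite case. If $D<0$ the conic $x^2-Dy^2=x^2+|D|y^2=n$ is an ellipse, and the total number of integer points on it equals the number of representations of $n$ by this positive definite form, which is $n^{o(1)}$ by the classical divisor bound. Since $n=M^{O(1)}$ this is already $M^{o(1)}$, and any arc carries at most this many points. I may therefore assume $D>0$ and, after possibly exchanging the roles of $x$ and $y$, that $n>0$ (the case $n=0$ is trivial because $D$ is not a square).

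The arithmetic input comes next. Working in the order $\ZZ[\sqrt D]$ with norm $N(x+y\sqrt D)=x^2-Dy^2$, let $\epsilon>1$ be the fundamental solution of the Pell equation $u^2-Dv^2=1$; multiplication by $\epsilon$ permutes the solutions of $x^2-Dy^2=n$ and partitions them into finitely many orbits. The number of orbits is controlled by the number of ideals of $\ZZ[\sqrt D]$ of norm $n$ together with the (fixed) class number, hence is $n^{o(1)}=M^{o(1)}$; this divisor-type bound is the genuinely arithmetic ingredient. Parametrising one branch by $x+y\sqrt D=\sqrt n\,e^{t}$, that is $x=\sqrt n\cosh t$ and $y=\sqrt{n/D}\sinh t$, the points of a single orbit occur at an arithmetic progression of parameters $t$ with fixed gap $\delta:=\log\epsilon>0$ depending only on $D$.

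The geometric step then bounds the number of orbit points on a fixed arc. The arc-length element is $ds=\sqrt n\,\sqrt{\sinh^2 t+D^{-1}\cosh^2 t}\,dt$, which for $t\ge 0$ is at least $\tfrac12\sqrt{n/D}\,e^{t}\,dt$; integrating over $t\in[a,b]$ with $a\ge 0$ shows that an arc of length $S$ satisfies $S\ge\tfrac12\sqrt{n/D}\pp{e^{b}-e^{a}}$, whence
\beqn
b-a\ \le\ \log\pp{1+2S\sqrt{D/n}}\ =\ O(\log M),
\eeqn
using $e^{a}\ge 1$ and $S,n=M^{O(1)}$. Splitting the given arc at the vertices and between the two branches into $O(1)$ pieces, each lying in a region $t\ge 0$ or $t\le 0$ of a single branch, every orbit meets each piece in at most $(b-a)/\delta+1=O(\log M)$ points. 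Multiplying by the $M^{o(1)}$ orbits gives $M^{o(1)}\cdot O(\log M)=M^{o(1)}$ lattice points on the arc, as claimed.

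I expect the main obstacle to be the arithmetic claim that the number of orbits is $M^{o(1)}$: this is where the special structure of the conic enters and where the divisor bound for the order $\ZZ[\sqrt D]$ and the finiteness of the class number must be invoked. The purely geometric half, that a single orbit meets a bounded arc in only $O(\log M)$ points, is easier and is consistent with Lemma \ref{small_arc}, since consecutive orbit points are separated in arc length by at least $\asymp\sqrt n\gg n^{1/6}$ and these separations grow geometrically along the branch; the only delicate region is near the vertex, where the curvature is largest, but there Lemma \ref{small_arc} already controls the local count.
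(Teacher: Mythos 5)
Your argument is correct, and at its core it is the paper's own: your decomposition of the solutions of $x^2-Dy^2=n$ into orbits under the Pell unit, with $O(\log M)$ orbit points on any arc of length $M^{O(1)}$, is exactly the paper's reduction of each point of the arc to a ``primitive'' solution in an arc of length $\ll\sqrt n$ via multiplication by powers of the fundamental unit $\xi$, with $O(\log M)$ powers attached to each primitive solution. Where you genuinely diverge is in the count of orbits (equivalently, of primitive solutions): the paper quotes Lemma 4 of \cite{Ci-Garaev} both for the bound $M^{o(1)}$ on lattice points in an arc of length $O(\sqrt n)$ and for the definite case $D<0$ (where $x,y\ll\sqrt n=M^{O(1)}$ makes that lemma apply directly), whereas you derive the same bound from the divisor-type estimate for ideals of norm $n$ in $\ZZ[\sqrt D]$ and the finiteness of the class number. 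Your route is more self-contained (and your explicit parametrization $x=\sqrt n\cosh t$, $y=\sqrt{n/D}\sinh t$ together with the arc-length integral makes the $O(\log M)$ spacing quantitative, where the paper only defines $t$ implicitly); the paper's route is shorter because it outsources the arithmetic ingredient. One small repair is needed: when $n<0$ you cannot reduce to $n>0$ by ``exchanging the roles of $x$ and $y$'' --- that produces the form $y^2-Dx^2$, not $x^2-Dy^2$ --- but the case $n<0$ is handled by the symmetric parametrization $x=\sqrt{|n|}\sinh t$, $y=\sqrt{|n|/D}\cosh t$, after which your orbit and arc-length computations go through verbatim, so the slip is cosmetic.
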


\begin{proof}
 This result is a variant of Lemma 4 in \cite{Ci-Garaev}, where the conclusion was proved when $1\leq x,y\leq M^{O(1)}$, (see Lemma 3.5 \cite{Vau-Wool} for a more general result).

\smallskip

If $D$ is negative, the result is contained in Lemma 4 in
\cite{Ci-Garaev} since it is clear that $1\le x,y\ll \sqrt
n=M^{O(1)}$. We must study though the case where $D$ is positive.

\smallskip

By symmetry we can consider only those arcs in the first quadrant, since any non-negative lattice point $(x,y)$ will lead us to no more than four lattice points $(\pm x,\pm y)$. Let $(u_0,v_0)$ be the minimal non-negative solution to the Pell's
equation $x^2-Dy^2=1$, and $\xi=u_0-\sqrt{D}v_0$ its related
fundamental unit in the ring of integers of $\QQ\pp{\sqrt{D}}$. Suppose
that $(x_0,y_0)$ is a positive solution to
$x_0^2-Dy_0^{2}=n$ that lies in our
initial arc and let $t\in\RR$ be the solution to
\beqn
(x_0+\sqrt{D}y_0)\xi^t=(x_0-\sqrt{D}y_0)\xi^{-t}.
\eeqn
Then for $m=[t]$, we have $(x_0+\sqrt{D}y_0)\xi^m=x_1+\sqrt{D}y_1\asymp
\sqrt{n}$. This means that each solution in our initial arc corresponds
to a `primitive' solution lying in an arc of length $\ll \sqrt{n}$.
Conversely, solutions in an arc of length $\ll \sqrt{n}$ can be
taken to larger arcs by multiplying by powers of $\xi^{-1}$. Since
our initial interval has length $M^{O(1)}$ there will be no more than
$O(\log M)$ powers connected to each primitive solution. The term
$O(\log M)$ is absorbed by $M^{o(1)}$. \

On the other hand, we know by Lemma 4 in \cite{Ci-Garaev} that the number of lattice points in an arc of length $O(\sqrt{n})$ is $M^{o(1)}$.
It follows that the number of solutions in the original arc will be bounded by $M^{o(1)}$.
\end{proof}

We are now in conditions to start the proof of Theorem \ref{Thm}.

\begin{proof}
Let $Q(x,y)=ax^2+bxy+cy^2+dx+ey+f$ be a quadratic form with
integer coefficients and discriminant $D=b^2-4ac\neq 0$. Whenever $a=c=0$, the congruence in \eqref{mod_quadratic} can be written in the form $XY\equiv \mu \pmod{p}$, where $X=bx+e$, $Y=by+d$ and $\mu=b\lambda-(ed+bf)$. This case was already studied in \cite{Ci-Garaev},  but one extra condition was required: $\mu$ must be coprime with $p$ or, equivalently, $XY-\mu$ must be absolutely irreducible modulo $p$. 

\smallskip

If $a\neq 0$ the congruence in \eqref{mod_quadratic} can be written as
\beqn
X^2-DY^2\equiv \mu \pmod{p},
\eeqn
where $X=Dy+2(ae-db)$, $Y=2ax+by+d$ and $\mu=4aD\lambda-D(4af-d^2)+4a(ae-db)$. The case $a=0$ and $c\neq 0$
follows by exchanging $x$ for $y$ in the previous argument (and so $c,e$ will be the coefficients of $x^2$ and $x$ instead of $a,d$). Our new variables $X,Y$ lie in intervals of length $\ll M$. Specifically $X$ lies in an interval of length $DM$ and $Y$ in an interval of length $\pp{2\Abs{a}+\Abs{b}}M$.

\smallskip

We also can assume that $p>D$. Since $D\neq 0$, different original
solutions will lead us to a different solution.

\smallskip

These observations allow us to bound the number of solutions to
\eqref{mod_quadratic} by the number of solutions of the congruence
\beqn x^2-Dy^2\equiv \mu \pmod{p}, \eeqn where $x,y$ lie in two
intervals of length $\ll M$.

\smallskip

Without loss of generality we can assume that $D$ is square-free. Otherwise $D=D_1k^2$, for some square-free integer $D_1$, and solutions $(x,y)$ of our
 equation would lead us to solutions $(x,ky)$ of $x^2-D_1(ky)^2\equiv \mu\pmod{p}$, where $ky$ would lie in some interval of length $\ll M$. The case $D=1$ corresponds to the problem $x^2-y^2=UV\equiv \mu\pmod{p}$, where $U=(x+y)$ and $V=(x-y)$ still lie in some intervals of length $\ll M$ and $(\mu,p)=1$, otherwise $UV-\mu$ will be reducible modulo $p$. Once more this case was already studied in \cite{Ci-Garaev}.

\smallskip

By the previous arguments it is enough to prove the result for
\beq{mod_hyp_ell}
x^2-Dy^2\equiv \lambda\pmod{p}, \qquad\left\{\begin{array}{l} K+1\leq x\leq K+M,\\ L+1\leq y\leq L+M,\end{array}\right.
\eeq
where $D$ is some square-free integer $\neq 0,1$ and $\lambda\in \ZZ$.

\smallskip

This equation is equivalent to \beqn
\pp{x^2+2Kx}-D\pp{y^2+2Ly}\equiv \mu\pmod {p}, \qquad 1\leq x,y\leq
M, \eeqn where $\mu=\lambda-(K^2-DL^2)$. By the pigeon hole principle
we have that for every positive integer $T<p$, there exists a
positive integer $t<T^2$ such that $tK\equiv k_0\pmod{p}$ and
$tL\equiv \ell_0\pmod {p}$ with $\Abs{k_0},\Abs{\ell_0}<p/T$. Thus
we can always rewrite the equation \eqref{mod_hyp_ell} as \beqn
\pp{tx^2+2k_0x}-D\pp{ty^2+2\ell_0y}\equiv \mu_0\pmod {p},\qquad
1\leq x,y\leq M, \eeqn where $\Abs{\mu_0}<p/2$. This modular
equation lead us to the following Diophantine equation
\beq{dioph_hyperbola}
\pp{tx^2+2k_0x}-D\pp{ty^2+2\ell_0y}=\mu_0+pz,\qquad  1\leq x,y\leq
M,\ z\in\ZZ, \eeq where $z$ must satisfy \beqn
\Abs{z}=\Abs{\frac{\pp{tx^2+2k_0x}-D\pp{ty^2+2\ell_0y}-\mu_0}{p}}<\frac{(1+\Abs{D})T^2M^2}{p}+\frac{2(1+\Abs{D})M}{T}+\frac{1}{2}.
\eeqn For each integer $z$ on the previous range the equation
defined in \eqref{dioph_hyperbola} is equivalent to:
\beq{hyperbola_ellipse} (tx+k_0)^2-D(ty+\ell_0)^2= n_z,\qquad 1\leq
x,y\leq M, \eeq where $n_z=t(\mu_0+pz)+(k_0^2-D\ell_0^2)$. We will
now study the number of solutions in terms of $n_z$.

\smallskip

If $n_z=0$, since $D$ is not a square, we have that $
tx+k_0=ty+\ell_0=0$ and there is at most one solution $(x,y)$.

\smallskip

Let now focus on the case $n_z\neq 0$. We will split the problem in
two different cases, depending on how big $M$ is compared to $p$.
\begin{itemize}
    \item Case ${M < \frac{p^{1/4}}{4\sqrt[4]{\pp{1+\Abs{D}}^3}}}$. In this case we take $T=8\pp{1+\Abs{D}}M$ in order to get $\Abs{z}<1$. Therefore it suffices to study solutions of
\beqn (tx+k_0)^2-D(ty+\ell_0)^2=n_0,\qquad 1\leq x,y\leq M. \eeqn

\smallskip

If $n_0 > 2^{48}(1+\Abs{D})^{12}M^{18}$, the integers $\Abs{tx+k_0}$
and $\Abs{ty+\ell_0}$ will lie in two intervals of length
$T^2M=2^6(1+\Abs{D})^2M^3$ and solutions to \eqref{mod_hyp_ell}
will come from lattice points in an arc of length smaller than
$2^8(1+\Abs{D})^2M^3<n_0^{1/6}$ (by hypothesis). From Lemma
\ref{small_arc} it follows that there will be no more than two
lattice points in such an arc.

\smallskip

If $n_0\leq 2^{48}(1+\Abs{D})^{12}M^{18}$, Lemma \ref{big_arc}
assures that the number of solutions will be $M^{o(1)}$.

\smallskip

    \item Case ${M\geq \frac{p^{1/4}}{4\sqrt[4]{(1+\Abs{D})^3}}}$. In this case we take $T=(p/M)^{1/3}$ and hence ${\Abs{z}\ll\frac{M^{4/3}}{p^{1/3}}}$.\\ Since $n_z=t(\mu_0+pz)+(k_0^2-D\ell_0^2)\ll p^2\ll M^8$ we can apply Lemma \ref{big_arc} to conclude that for every $z$ in the range above there will be $M^{o(1)}$ solutions to its related Diophantine equation.
\end{itemize}
We have proved that in all cases, the number of solutions to \eqref{hyperbola_ellipse} is $M^{o(1)}$ for each
$n_z$. On the other hand, the number of possible
values of $z$ is $O(M^{4/3}p^{-1/3}+1)$. It follows that \beqn
I_Q(M;K,L)\, \ll\left (M^{4/3}p^{-1/3}+1\right )M^{o(1)}. \eeqn
\end{proof}

\section*{Acknowledgments}

I would like to thank J. Cilleruelo and M. Garaev for their advice and helpful suggestions in the preparation of this paper and the referee for his  valuable comments.

\end{document}